\documentclass[12pt,reqno]{article}

\usepackage[usenames]{color}
\usepackage{amssymb}
\usepackage{graphicx}
\usepackage{amscd}
\usepackage{commath}

\usepackage[colorlinks=true,
linkcolor=webgreen,
filecolor=webbrown,
citecolor=webgreen]{hyperref}

\definecolor{webgreen}{rgb}{0,.5,0}
\definecolor{webbrown}{rgb}{.6,0,0}

\usepackage{color}
\usepackage{fullpage}
\usepackage{float}

\usepackage{graphics,amsmath,amssymb}
\usepackage{amsthm}
\usepackage{amsfonts}
\usepackage{latexsym}
\usepackage{epsf}

\usepackage{mathrsfs}

\setlength{\textwidth}{6.5in}
\setlength{\oddsidemargin}{.1in}
\setlength{\evensidemargin}{.1in}
\setlength{\topmargin}{-.1in}
\setlength{\textheight}{8.4in}

\newcommand{\seqnum}[1]{\href{http://oeis.org/#1}{\underline{#1}}}

\begin{document}

%\begin{center}
%\epsfxsize=4in
%\leavevmode\epsffile{logo129.eps}
%\end{center}

\theoremstyle{plain}
\newtheorem{theorem}{Theorem}
\newtheorem{corollary}[theorem]{Corollary}
\newtheorem{lemma}[theorem]{Lemma}
\newtheorem{proposition}[theorem]{Proposition}

\theoremstyle{definition}
\newtheorem{definition}[theorem]{Definition}
\newtheorem{example}[theorem]{Example}
\newtheorem{conjecture}[theorem]{Conjecture}
\newtheorem{identity}{Identity}

\theoremstyle{remark}
\newtheorem{remark}[theorem]{Remark}

\begin{center}
\vskip 1cm{\LARGE\bf Determinants Containing Powers \\ 
\vskip .12in
of Generalized Fibonacci Numbers}
\vskip 1cm
\large
Aram Tangboonduangjit and Thotsaporn Thanatipanonda\\
Mahidol University International College\\
Nakhonpathom 73170\\
Thailand \\
\href{mailto:aram.tan@mahidol.edu}{\tt aram.tan@mahidol.ac.th} \\
\href{mailto:thotsaporn.tha@mahidol.ac.th}{\tt thotsaporn.tha@mahidol.ac.th}\\
\end{center}

\vskip .2 in

\begin{abstract}
We study the determinants of matrices whose entries are powers of the Fibonacci numbers. We then extend the results to include entries that are powers of  a second-order linear recurrence relation. These results motivate a fundamental identity of determinants whose entries are powers of linear polynomials. Finally, we discuss 
the determinants of matrices whose entries are products 
of the general second-order linear recurrence relations.
\end{abstract}

\section{Introduction}
\label{sec:introduction}

In the first issue of the \textit{Fibonacci Quarterly}, Alfred posed the following problem \cite{alfred-0}:
\begin{quote}
Prove $$\begin{vmatrix}F_n^2 & F_{n+1}^2 & F_{n+2}^2 \\\\ F_{n+1}^2 & F_{n+2}^2 & F_{n+3}^2 \\\\ F_{n+2}^2 & F_{n+3}^2 & F_{n+4}^2
\end{vmatrix} = 2(-1)^{n+1},$$ where $F_n$ is the $n$th Fibonacci number.
\end{quote}
In the second volume  of the \textit{Fibonacci Quarterly}, Parker \cite{parker} posed a similar problem with 
the exponent of each entry changed to $4$ and the dimension of the matrix changed to $5\times 5$.
These two results naturally suggest the following
question: what would be the determinant of an analogous form where the dimension of the matrix and the exponent of each entry are arbitrary? Carlitz \cite{carlitz} answered this question by showing that the determinant of the form $D(r,n)=\abs{F_{n+i+j}^r}$, where $i,j=0,1,\dots,r$, is given by

\begin{equation}\label{eq1} 
D(r,n)=(-1)^{(n+1){r+1 \choose 2}}(F_1^r F_2^{r-1}\cdots F_r)^2\cdot\prod_{i=0}^r {r\choose i}.
\end{equation}

In this paper we generalize the entries even further by considering the determinant of the form $D(r,s,k,n)=\abs{F_{s+k(n+i+j)}^r}$, where $i,j=0,1,\dots,r$.  We show that
\begin{equation}\label{eq2}
D(r,s,k,n)=(-1)^{(s+kn+1) {r+1 \choose 2}}(F_k^rF_{2k}^{r-1}\cdots F_{rk} )^2\cdot\prod_{i=0}^r \binom{r}{i}.
\end{equation}
Carlitz based his proof of formula \eqref{eq1} on the Binet form of the Fibonacci numbers; whereas we employ the matrix methods, such as the factorization method of Krattenthaler, to prove formula \eqref{eq2}. In addition, we require a generalized form of the Catalan identity, proved by Melham and Shannon \cite{melham}. In Section 2, we present an alternative proof of the generalized Catalan identity using a matrix representation of the sequence and the properties of the matrix multiplication.  In Section 3, we then present the proof of formula \eqref{eq2} as a special case of the determinant with entries involving the powers of the numbers in a second-order linear recurrence with constant coefficients. In the last section, we present the determinant whose entries are the products of the numbers defined as a second-order linear recurrence with constant coefficients using the Desnanot-Jacobi identity. The methodology used for this work relies on a computer programming developed by the second author \cite{thotsaporn}.

\section{Generalized Catalan identity}
\label{sec:Catalan}

The well-known Catalan identity states that for all nonnegative integers $s$ and $i$, 
$$F_{s+i}^2-F_sF_{s+2i}= (-1)^sF_i^2.$$
A generalization of this identity useful for this work is given by Melham and Shannon \cite{melham}. We shall, however, present an alternative proof of this generalization. For integers $a,b,c_1$, and $c_2$ with $c_2\neq 0$, let $W_n = W_n(a,b;c_1,c_2)$ denote the second-order linear recurrence with constant coefficients, defined by $$W_0=a,\, W_1=b \quad\text{and}\quad W_n = c_1W_{n-1}+c_2W_{n-2} \quad\text{for $n\ge 2$.}$$
With this notation, the Fibonacci sequence $(F_n)$ and the Lucas sequence $(U_n)$ correspond to $F_n = W_n(0,1;1,1)$ and  $U_n = W_n(0,1;c_1,c_2)$, respectively.  Moreover, we can use this recurrence to extend the definition of a sequence to the terms with negative indices. Usually, we can explicitly find the relationship between the negative-indexed terms and the positive-indexed terms. For example, for the Fibonacci sequence and the Lucas sequence, we have  
$$F_{-n}= (-1)^{n+1}F_n \quad \text{and} \quad  U_{-n} = (-1)^{n+1}c_2^{-n}U_{n}\quad\text{for $n\ge 1$.}$$

\begin{proposition}[Generalized Catalan Identity]  \label{GenCat}
Let $W_n = W_n(a_0,a_1;c_1,c_2)$ and $Y_n = W_n(b_0,b_1;c_1,c_2)$ be second-order linear recurrences. Then
\begin{equation} \label{eqW}
W_{s+i}Y_{s+j} - W_sY_{s+i+j} = (-c_2)^s(W_1Y_{j} - W_0Y_{j+1}) \cdot U_i,
\end{equation}
for all integers $s,j$, and $i$.
\end{proposition}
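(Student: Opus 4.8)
The plan is to prove the identity by realizing the sequences through their companion matrix and then exploiting multiplicativity of matrix powers. Set
\[
M = \begin{pmatrix} c_1 & c_2 \\ 1 & 0 \end{pmatrix},
\]
which is invertible since $\det M = -c_2 \neq 0$. For any sequence $X$ satisfying $X_n = c_1 X_{n-1} + c_2 X_{n-2}$ (in particular $W$, $Y$, $U$), this recurrence is equivalent to the vector recursion $\binom{X_n}{X_{n-1}} = M\binom{X_{n-1}}{X_{n-2}}$, so an induction in both directions (this is where invertibility of $M$, hence $c_2\neq 0$, enters) gives $\binom{X_{n+1}}{X_n} = M^n\binom{X_1}{X_0}$ for every integer $n$. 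An easy induction — or identifying the two columns of $M^n$ with the state vectors of $U$ and of $c_2U_{\bullet-1}$ — yields the explicit form $M^n = \begin{pmatrix} U_{n+1} & c_2U_n \\ U_n & c_2U_{n-1}\end{pmatrix}$, valid for all integers $n$ (with $U$ extended to negative indices by the recurrence).

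First I would record the resulting addition formula: reading off the bottom entry of $\binom{X_{m+n+1}}{X_{m+n}} = M^n\binom{X_{m+1}}{X_m}$ with the explicit $M^n$ gives
\[
X_{m+n} = U_n\,X_{m+1} + c_2\,U_{n-1}\,X_m
\]
for all integers $m,n$. I would apply this twice in the left-hand side of \eqref{eqW}: once with $X=W$, $m=s$, $n=i$, and once with $X=Y$, $m=s+j$, $n=i$ (so that $X_{m+n}=Y_{s+i+j}$). Expanding, the two terms carrying the factor $c_2U_{i-1}W_sY_{s+j}$ cancel, leaving the reduction $W_{s+i}Y_{s+j} - W_sY_{s+i+j} = U_i\bigl(W_{s+1}Y_{s+j} - W_sY_{s+j+1}\bigr)$.

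It then remains to evaluate the second factor, and this is again pure matrix multiplication. Using $\binom{W_{s+1}}{W_s} = M^s\binom{W_1}{W_0}$ and $\binom{Y_{s+j+1}}{Y_{s+j}} = M^{s+j}\binom{Y_1}{Y_0} = M^s\binom{Y_{j+1}}{Y_j}$, we obtain the factorization
\[
\begin{pmatrix} W_{s+1} & Y_{s+j+1} \\ W_s & Y_{s+j} \end{pmatrix}
= M^s \begin{pmatrix} W_1 & Y_{j+1} \\ W_0 & Y_j \end{pmatrix},
\]
and taking determinants, with $\det M^s = (-c_2)^s$, gives $W_{s+1}Y_{s+j} - W_sY_{s+j+1} = (-c_2)^s(W_1Y_j - W_0Y_{j+1})$. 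Substituting this into the reduction from the previous paragraph produces exactly the claimed identity \eqref{eqW}.

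No step is genuinely hard here; the main points needing care are (i) justifying that the matrix power formula — and hence the addition formula — is valid for negative indices, so that \eqref{eqW} holds for all integers $s,j,i$ and not merely nonnegative ones, and (ii) the index bookkeeping in applying the addition formula at the two different base points $m=s$ and $m=s+j$, which is precisely what makes the $c_2U_{i-1}$ terms cancel. I would isolate the two matrix facts (the state-vector recursion and the closed form of $M^n$) as a short preliminary lemma, after which \eqref{eqW} follows in a couple of lines.
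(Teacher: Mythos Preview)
Your proof is correct, and it shares its core ingredient with the paper --- both arguments establish the $i=1$ case by writing
\(
\begin{pmatrix} W_{s+1} & Y_{s+j+1} \\ W_s & Y_{s+j} \end{pmatrix}
= M^s \begin{pmatrix} W_1 & Y_{j+1} \\ W_0 & Y_j \end{pmatrix}
\)
and taking determinants. Where the two routes diverge is in the passage from $i=1$ to general $i$. The paper does this by a direct two-sided induction on $i$, splitting the top row of the $2\times 2$ determinant via the scalar recurrence (forward for $i>1$, backward for $i<0$) and invoking the recurrence for $U_i$ at the end. You instead invest in the closed form $M^n=\begin{pmatrix} U_{n+1} & c_2U_n\\ U_n & c_2U_{n-1}\end{pmatrix}$, extract the addition formula $X_{m+n}=U_nX_{m+1}+c_2U_{n-1}X_m$, and use it to collapse the general case to $U_i$ times the $i=1$ expression in a single cancellation. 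Your approach is a shade more conceptual --- it makes transparent why the factor $U_i$ appears (it is literally an entry of $M^i$) and avoids the separate positive/negative inductions --- at the modest cost of needing the explicit form of $M^n$; the paper's version is slightly more bare-handed. Both are short and clean; your care about validity for negative indices is well placed and matches the paper's treatment of $s<0$ and $i<0$.
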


\begin{proof}
The proof is by induction on $i$. For the case when $i=0$, the identity is trivial. For the case when $i=1$, we have \\
$$\begin{pmatrix}
  W_{s+1} & Y_{s+j+1}  \\
  W_{s} & Y_{s+j} \\
 \end{pmatrix}  =
\begin{pmatrix}
  c_1 & c_2  \\
  1 & 0 \\
 \end{pmatrix}
\begin{pmatrix}
  W_{s} & Y_{s+j}  \\
  W_{s-1} & Y_{s+j-1} \\
 \end{pmatrix}  
=\begin{pmatrix}
  c_1 & c_2  \\
  1 & 0 \\
 \end{pmatrix}^s
\begin{pmatrix}
  W_{1} & Y_{j+1}  \\
  W_{0} & Y_{j} \\
 \end{pmatrix}  \;\ \text{ for } s \geq 0,$$
 
 and 
$$\begin{pmatrix}
  W_{s+1} & Y_{s+j+1}  \\
  W_{s} & Y_{s+j} \\
 \end{pmatrix}  =
\begin{pmatrix}
  c_1 & c_2  \\
  1 & 0 \\
 \end{pmatrix}^{-1}
\begin{pmatrix}
  W_{s+2} & Y_{s+j+2}  \\
  W_{s+1} & Y_{s+j+1} \\
 \end{pmatrix}  
=\begin{pmatrix}
  c_1 & c_2  \\
  1 & 0 \\
 \end{pmatrix}^s
\begin{pmatrix}
  W_{1} & Y_{j+1}  \\
  W_{0} & Y_{j} \\
 \end{pmatrix}  \;\ \text{ for } s < 0,$$   
where the second equality in both equations follows from repeated application of the matrix representation in the first equality. Taking the determinant of both sides of any one equation yields
\begin{equation} \label{eqWb}
W_{s+1}Y_{s+j}-W_sY_{s+j+1} = (-c_2)^s(W_1Y_j - W_0Y_{j+1}).
\end{equation}
Now, consider two cases.
\begin{enumerate}
\item[]\textbf{Case $i > 1$:} Assume that the identity holds for some integers $i-1$ and $i-2$. Then
\begin{align*} 
W_{s+i}Y_{s+j} - W_sY_{s+i+j} &=
\begin{vmatrix}
  W_{s+i} & Y_{s+i+j}  \\
  W_{s} & Y_{s+j} \\
 \end{vmatrix}\\
&=\begin{vmatrix}
  c_1W_{s+(i-1)}+ c_2W_{s+(i-2)} & c_1Y_{s+(i-1)+j}+ c_2Y_{s+(i-2)+j} \\
  W_{s} & Y_{s+j} \\ 
 \end{vmatrix} \\
&=c_1\begin{vmatrix}
  W_{s+(i-1)} & Y_{s+(i-1)+j}  \\
  W_{s} & Y_{s+j} \\
 \end{vmatrix}
+c_2\begin{vmatrix}
  W_{s+(i-2)} & Y_{s+(i-2)+j}  \\
  W_{s} & Y_{s+j} \\
 \end{vmatrix} \\
&= (-c_2)^s(W_1Y_j - W_0Y_{j+1})  (c_1U_{i-1}+c_2U_{i-2})\\
&= (-c_2)^s(W_1Y_j - W_0Y_{j+1})U_i,
\end{align*}

\item[]\textbf{Case $i < 0$:} Assume that the identity holds for some integers $i+1$ and $i+2$. Then
\begin{align*} 
W_{s+i}Y_{s+j} - W_sY_{s+i+j} &=
\begin{vmatrix}
  W_{s+i} & Y_{s+i+j}  \\
  W_{s} & Y_{s+j} \\
 \end{vmatrix}\\
&=\begin{vmatrix}
  \dfrac{-c_1}{c_2}W_{s+(i+1)}+ \dfrac{1}{c_2}W_{s+(i+2)} 
  & \dfrac{-c_1}{c_2}Y_{s+(i+1)+j}+ \dfrac{1}{c_2}Y_{s+(i+2)+j} \\
  W_{s} & Y_{s+j} \\ 
 \end{vmatrix} \\
&=\dfrac{-c_1}{c_2}\begin{vmatrix}
  W_{s+(i+1)} & Y_{s+(i+1)+j}  \\
  W_{s} & Y_{s+j} \\
 \end{vmatrix}
+\dfrac{1}{c_2}\begin{vmatrix}
  W_{s+(i+2)} & Y_{s+(i+2)+j}  \\
  W_{s} & Y_{s+j} \\
 \end{vmatrix} \\
&= (-c_2)^s(W_1Y_j - W_0Y_{j+1})  (\dfrac{-c_1}{c_2}U_{i+1}+\dfrac{1}{c_2}U_{i+2})\\
&= (-c_2)^s(W_1Y_j - W_0Y_{j+1})U_i,
\end{align*}
\end{enumerate}
where we apply the induction hypothesis in the penultimate equality in both cases. Hence, the proof is complete. 
\end{proof}
We note some special cases of Proposition \ref{GenCat} useful in later sections:
\begin{equation} \label{eqUU}
U_{s+i}U_{s+j}-U_sU_{s+i+j} = (-c_2)^s \cdot U_iU_j,
\end{equation}
\begin{equation} \label{eqWU}
U_{s+i}W_{s+j}-U_sW_{s+i+j} = (-c_2)^s \cdot U_iW_j,
\end{equation}
and
\begin{equation} \label{eqWW}
W_{s+i}W_{s+j}-W_sW_{s+i+j} =  (-c_2)^s \cdot (W_1W_j - W_0W_{j+1})U_i \\
=  (-c_2)^s \cdot \Delta \cdot U_iU_j,
\end{equation}
where $\Delta = \begin{vmatrix}
  W_{1} & W_{2}  \\
  W_{0} & W_{1} \\
 \end{vmatrix} = a_1^2-c_1a_0a_1-c_2a_0^2$.  \\

We justify the second equality of \eqref{eqWW} by applying Proposition \ref{GenCat} as follows: In \eqref{eqW},
let $Y_n=W_n$, substitute $j=1$ and $s=0$, respectively, and rename the index $i$ by $j$. 

The identity \eqref{eqWW} can be restated 
as follows:
\begin{corollary}
Let $k,n,r,s$, and $t$ be integers. Then
\begin{equation} \label{eqWWW}
W_{s+k(n+t)} = A(t)W_{s+kn}+B(t)U_{s+k(n+r)},
\end{equation}
where $A(t) = \dfrac{W_{k(t-r)}}{W_{-kr}}$ and  
$B(t) = \dfrac{-(-c_2)^{-kr}\cdot \Delta \cdot U_{kt}}{W_{-kr}}$. 
\end{corollary}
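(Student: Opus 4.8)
The plan is to obtain \eqref{eqWWW} directly from identity \eqref{eqWW} by a single well-chosen substitution, followed by a short bookkeeping step for a negative index; no induction, and no Binet form, is needed.

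First I would decide which products of $W$'s must play the role of the two ``anti-diagonal'' terms $W_{\sigma+\mu}W_{\sigma+\nu}$ and $W_\sigma W_{\sigma+\mu+\nu}$ on the left of \eqref{eqWW} taken with $Y_n = W_n$. Since \eqref{eqWWW}, after clearing the denominator $W_{-kr}$, should read
\[
W_{-kr}\,W_{s+k(n+t)} \;=\; W_{k(t-r)}\,W_{s+kn} \;-\;(-c_2)^{-kr}\,\Delta\,U_{kt}\,U_{s+k(n+r)},
\]
the natural guess is that the pairs $\{-kr,\ s+k(n+t)\}$ and $\{k(t-r),\ s+kn\}$ are these two index pairs. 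Observe that in each pair the indices sum to $s+k(n+t-r)$, which is exactly the consistency condition for both pairs to arise from one instance of \eqref{eqWW}. This pins down the substitution $\sigma = k(t-r)$, $\mu = -kt$, $\nu = s+k(n+r)$: then $\sigma+\mu = -kr$, $\sigma+\nu = s+k(n+t)$, and $\sigma+\mu+\nu = s+kn$, so \eqref{eqWW} becomes
\[
W_{-kr}\,W_{s+k(n+t)} - W_{k(t-r)}\,W_{s+kn} \;=\;(-c_2)^{k(t-r)}\,\Delta\,U_{-kt}\,U_{s+k(n+r)}.
\]

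Next I would simplify the right-hand side. From the recurrence defining $U$ (equivalently, from the stated relation $U_{-n} = (-1)^{n+1}c_2^{-n}U_n$, valid for all integers once extended by the recurrence) one gets $U_{-m} = -(-c_2)^{-m}U_m$ for every integer $m$. Taking $m = kt$ gives $(-c_2)^{k(t-r)}U_{-kt} = -(-c_2)^{k(t-r)-kt}U_{kt} = -(-c_2)^{-kr}U_{kt}$. Substituting this and then solving for $W_{s+k(n+t)}$ — dividing through by $W_{-kr}$, which is nonzero since it is the denominator of both $A(t)$ and $B(t)$ — yields precisely \eqref{eqWWW} with the claimed $A(t)$ and $B(t)$.

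The only step that takes any thought is identifying the correct substitution into \eqref{eqWW}; everything afterward (the reflection identity for $U$ and the final division) is mechanical. For readers who prefer a structural viewpoint, one can instead note that, with $P := s+kn$ ranging over all integers, each of the sequences $P \mapsto W_{P+kt}$, $P \mapsto W_P$, and $P \mapsto U_{P+kr}$ solves the second-order recurrence $x_P = c_1x_{P-1}+c_2x_{P-2}$; the last two are linearly independent because by \eqref{eqWU} their Casoratian $U_{P+kr-1}W_P - U_{P+kr}W_{P-1}$ equals $(-c_2)^{P+kr}c_2^{-1}W_{-kr}$, a nonzero multiple of $W_{-kr}$; hence $W_{P+kt}$ is a unique combination $A(t)W_P + B(t)U_{P+kr}$, and evaluating at two values of $P$ and solving the resulting $2\times2$ system recovers the coefficients. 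The substitution argument above is the shorter of the two, so that is the one I would write out in detail.
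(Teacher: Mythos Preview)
Your proof is correct and follows essentially the same approach as the paper: a single substitution into \eqref{eqWW} followed by division by $W_{-kr}$. The paper's choice of substitution (in the notation of \eqref{eqWW}: $s\to -kr$, $i\to kt$, $j\to s+k(n+r)$) is marginally cleaner, since it produces $U_{kt}$ directly and so avoids your extra step through the reflection identity $U_{-m}=-(-c_2)^{-m}U_m$.
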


\begin{proof}
Let integers $k',n',r',s'$, and $t'$ be given. Applying \eqref{eqWW} with $s=-k'r'$, $i=k't'$, and $j=s'+k'(n'+r')$, we have 
\begin{equation*}
W_{-k'r'}W_{s'+k'(n'+t')} = W_{k'(t'-r')}W_{s'+k'n'}-(-c_2)^{-k'r'}
\cdot \Delta \cdot U_{k't'}U_{s'+k'(n'+r')}.
\end{equation*} 
Dividing by $W_{-k'r'}$ on both sides and renaming the variables yield the identity \eqref{eqWWW}, as required.
\end{proof}

\section{Determinants involving powers of terms of second-order recurrence} 
\label{Main}

Our goal in this section is to give the closed form of the determinant of the
$(r+1)\times(r+1)$ matrix whose entries are $W_{s+k(n+i+j)}^r$, 
where $i,j=0,1,\dots,r$, and $s$ and $k$ are any integers. This matrix is

\begin{equation} \label{eq30}
\mathbb{A}_n^{s,k}(r)=\begin{pmatrix}
W_{s+kn}^r & W_{s+k(n+1)}^{r} & \cdots &   W_{s+k(n+r)}^r\\
W_{s+k(n+1)}^r & W_{s+k(n+2)}^{r} & \cdots & W_{s+k(n+r+1)}^r\\
\vdots & \vdots & \ddots &  \vdots  \\
W_{s+k(n+r)}^r & W_{s+k(n+r+1)}^{r} & \cdots & W_{s+k(n+2r)}^r\\
\end{pmatrix}.
\end{equation}

We begin with the following proposition on the determinant whose entries 
are some power of linear polynomials. 

\begin{lemma}\label{main_lemma} Let $c_0,\dots,c_r$ and $x_0,\dots,x_r$ be real numbers. Then

 \begin{equation} \label{TT}
   \det( (c_jx_i+1)^r)_{0 \leq i,j \leq r}  = \prod_{0 \leq i < j \leq r}(x_i-x_j)
\prod_{0 \leq i < j \leq r}(c_i-c_j) \prod_{i=0}^r \binom{r}{i}.   
\end{equation}
\end{lemma}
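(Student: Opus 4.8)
The plan is to recognize the matrix $\bigl((c_jx_i+1)^r\bigr)_{0\le i,j\le r}$ as a product of two matrices via the binomial theorem, so that its determinant factors through the Cauchy--Binet formula into a product of two Vandermonde-type determinants. Expanding each entry,
\[
(c_jx_i+1)^r=\sum_{k=0}^r\binom{r}{k}(c_jx_i)^k=\sum_{k=0}^r\binom{r}{k}x_i^k\,c_j^k,
\]
so that $\bigl((c_jx_i+1)^r\bigr)=V\,D\,W$ where $V=(x_i^k)_{0\le i,k\le r}$ is an $(r+1)\times(r+1)$ Vandermonde matrix in the $x_i$, $W=(c_j^k)_{0\le k,j\le r}$ is the transpose of a Vandermonde matrix in the $c_j$, and $D=\operatorname{diag}\bigl(\binom{r}{0},\binom{r}{1},\dots,\binom{r}{r}\bigr)$.

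The next step is to take determinants of both sides using multiplicativity: $\det\bigl((c_jx_i+1)^r\bigr)=\det V\cdot\det D\cdot\det W$. By the standard Vandermonde evaluation, $\det V=\prod_{0\le i<j\le r}(x_j-x_i)$ and $\det W=\prod_{0\le i<j\le r}(c_j-c_i)$, while $\det D=\prod_{i=0}^r\binom{r}{i}$. The only subtlety is matching the sign conventions in the stated formula, which is written with factors $(x_i-x_j)$ and $(c_i-c_j)$ for $i<j$: each of these introduces $(-1)^{\binom{r+1}{2}}$, and the two sign changes cancel, so the product $\prod_{i<j}(x_i-x_j)\prod_{i<j}(c_i-c_j)$ equals $\prod_{i<j}(x_j-x_i)\prod_{i<j}(c_j-c_i)$ and the identity comes out exactly as claimed.

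There is essentially no hard obstacle here; the factorization $(c_jx_i+1)^r=\sum_k\binom{r}{k}x_i^k c_j^k$ is the whole idea, and once it is in place the result is immediate. The one point requiring a word of care is that the three matrices $V$, $D$, $W$ are genuinely $(r+1)\times(r+1)$ and square, so plain multiplicativity of the determinant suffices and one need not invoke Cauchy--Binet in its general rectangular form. I would also remark that the hypothesis that the $c_i$ and $x_i$ are real is inessential for the algebraic identity; it holds over any commutative ring, which is convenient for the later application where the $x_i$ and $c_j$ will be specialized to ratios built from recurrence terms.
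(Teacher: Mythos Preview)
Your proof is correct and is in fact cleaner than the paper's. The paper proves the lemma by Krattenthaler's \emph{factorization method}: it argues that the determinant vanishes whenever two $x_i$'s (or two $c_j$'s) coincide, so each $(x_i-x_j)$ and $(c_i-c_j)$ must divide it; a degree count then shows the quotient is a constant, and that constant $\prod_i\binom{r}{i}$ is extracted by comparing the coefficient of the monomial $\prod_i(c_ix_i)^i$ on both sides. Your approach instead recognizes the matrix outright as $V\,D\,W$ with $V,W$ Vandermonde and $D$ diagonal, via the binomial expansion $(c_jx_i+1)^r=\sum_k\binom{r}{k}x_i^kc_j^k$, and reads off the answer from multiplicativity of the determinant. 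Your route is shorter, avoids the somewhat delicate coefficient-matching step, and makes the appearance of the binomial coefficients transparent; the paper's method has the compensating virtue of being a general-purpose template that works even when no such product decomposition is visible.
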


\begin{proof}
We prove Lemma \ref{main_lemma} by using the factorization method \cite{krat}. The determinant will be $0$ if $x_0$ is replaced by any $x_i$ for $0<i\le r$, since some two rows of the matrix would be equal. This implies that $(x_0-x_i)$ is a factor of the determinant for each $i=1,2,\dots,r$. Similarly, we have that $(x_1-x_i)$ is a factor of the determinant for each $i=2,\dots,r$, and so on.  In a similar manner, we see that if $c_0$ is replaced by any $c_j$ for $0<j \le r$, then two columns of the matrix will be the same yielding the zero determinant. This implies that $(c_0-c_j)$ is a factor of the determinant for each $j=1,2,\dots,r$. Similarly, we have that $(c_1-c_j)$ is a factor of the determinant for each $j=2,\dots,r$, and so on. Therefore
\begin{equation}\label{factor}
\prod_{0 \leq i < j \leq r}(x_i-x_j)
\prod_{0 \leq i < j \leq r}(c_i-c_j)
\end{equation} 
is a factor of this determinant. As a function of $x_i$ for some fixed $i$ or a function of $c_j$ for some fixed $j$ the determinant is a polynomial of degree $r$. This implies that the factor \eqref{factor} and the required determinant have the same degree. Therefore, we can write the determinant as
\begin{equation*}
\det( (c_jx_i+1)^r)_{0 \leq i,j \leq r}  = C \prod_{0 \leq i < j \leq r}(x_i-x_j)
\prod_{0 \leq i < j \leq r}(c_i-c_j),
\end{equation*}
for some constant $C$.
To find $C$, we compare on both sides the coefficients of the monomial
\begin{equation}\label{monomial}
(c_rx_r)^r(c_{r-1}x_{r-1})^{r-1}\cdots(c_0x_0)^0.
\end{equation}
On the right-hand side, the monomial \eqref{monomial} appears as $$\prod_{0<i< j\le r}(-x_j)(-c_j) = \prod_{0<i<j\le r}x_jc_j.$$ Hence, the coefficient of the monomial \eqref{monomial} on the right-hand side is just equal to $C$. We see that for each $0\le i \le r$, the term $(c_ix_i)^i$ appears in $(c_ix_i+1)^r$. Hence, on the left-hand side, the monomial \eqref{monomial} appears as $$\pm\prod_{0\le i \le r}(c_ix_i+1)^r.$$ By the definition of the determinant, the sign in front of the expression is determined by the parity of the identity permutation $(0)(1)\cdots(r)$. Since $(0)(1)\cdots(r)=(01)(01)$, it follows that the identity permutation is even. Hence, the sign is determined to be $+$.  Since, for each $0\le i \le r$, the coefficient of $(c_ix_i)^i$ in $(c_ix_i+1)^r$ is $\binom{r}{i}$,  it follows that 
\[  C =  \prod_{i=0}^r \binom{r}{i}.\]   
This completes the proof of Lemma \ref{main_lemma}. 
\end{proof}

\begin{corollary}
Let $A(j), B(j), X_i, Y_i$ be real numbers for $0 \leq i,j \leq r$. Then 
\begin{multline}\label{restate-lemma}
\det( (A(j)X_i+B(j)Y_i)^r)_{0 \leq i,j \leq r} \\ = \prod_{0 \leq i < j \leq r}(X_iY_j-X_jY_i)
\prod_{0 \leq i < j \leq r}(A(i)B(j)-A(j)B(i)) \prod_{i=0}^r \binom{r}{i}. 
\end{multline}
\end{corollary}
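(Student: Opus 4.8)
The plan is to derive the Corollary from Lemma \ref{main_lemma} by a change of variables, followed by a standard polynomial-identity argument to remove the genericity hypotheses that the substitution forces on us.

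First I would impose a temporary nondegeneracy condition: $Y_i \neq 0$ for all $0 \le i \le r$ and $B(j) \neq 0$ for all $0 \le j \le r$. Under this assumption each entry factors as
$$A(j)X_i + B(j)Y_i = B(j)\,Y_i\left(\frac{A(j)}{B(j)}\cdot\frac{X_i}{Y_i} + 1\right),$$
so that with $c_j := A(j)/B(j)$ and $x_i := X_i/Y_i$ we have $(A(j)X_i+B(j)Y_i)^r = B(j)^r\,Y_i^r\,(c_jx_i+1)^r$. Pulling the scalar $Y_i^r$ out of the $i$th row and the scalar $B(j)^r$ out of the $j$th column gives
$$\det\bigl((A(j)X_i+B(j)Y_i)^r\bigr)_{0\le i,j\le r} = \Bigl(\prod_{i=0}^r Y_i^r\Bigr)\Bigl(\prod_{j=0}^r B(j)^r\Bigr)\det\bigl((c_jx_i+1)^r\bigr)_{0\le i,j\le r}.$$

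Next I would apply Lemma \ref{main_lemma} to the remaining determinant and translate the differences of the new variables back to the original data via $x_i - x_j = (X_iY_j - X_jY_i)/(Y_iY_j)$ and $c_i - c_j = (A(i)B(j) - A(j)B(i))/(B(i)B(j))$. Since each index occurs in exactly $r$ of the pairs $\{i,j\}$ with $i<j$, one has $\prod_{0\le i<j\le r} Y_iY_j = \prod_{i=0}^r Y_i^r$ and likewise $\prod_{0\le i<j\le r} B(i)B(j) = \prod_{j=0}^r B(j)^r$. Thus the denominators generated by Lemma \ref{main_lemma} are exactly cancelled by the prefactors $\prod_i Y_i^r$ and $\prod_j B(j)^r$, and the identity \eqref{restate-lemma} drops out.

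Finally, to remove the nondegeneracy assumption I would observe that both sides of \eqref{restate-lemma} are polynomials in the $2(r+1)$ scalars $X_i,Y_i$ together with the $2(r+1)$ scalars $A(j),B(j)$ — the left side because a determinant is polynomial in its entries and each entry is itself a polynomial, the right side manifestly. These two polynomials agree on the set where all $Y_i$ and all $B(j)$ are nonzero, which is dense (indeed its complement is a finite union of hyperplanes), so they agree identically; equivalently, one may invoke continuity together with the same density fact. I expect this last density step to be the only point that genuinely needs to be stated carefully; the manipulations in the first two paragraphs are routine bookkeeping of products over pairs.
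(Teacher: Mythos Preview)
Your proposal is correct and follows essentially the same approach as the paper: assume $Y_i\neq 0$ and $B(j)\neq 0$, substitute $c_j=A(j)/B(j)$ and $x_i=X_i/Y_i$ into Lemma~\ref{main_lemma}, clear denominators, and then extend to the degenerate cases by continuity/density. Your version is simply more explicit about the bookkeeping of the factors $\prod Y_i^r$ and $\prod B(j)^r$ and about why the continuity argument works, but the strategy is identical.
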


\begin{proof}
We prove in the case of $B(j) \neq 0$ and $Y_i \neq 0$ for all  $0 \leq i,j \leq r$. Applying Lemma \ref{main_lemma} with $c_j=A(j)/B(j)$ and $x_i =X_i/Y_i$ for all  $0 \leq i,j \leq r$ and clearing the denominators, we obtain \eqref{restate-lemma}.  The proof of the other case when some of $B(j)$ or $Y_i$ are $0$ follows from the fact that the determinant with polynomial entries is a continuous function.
\end{proof}

Thus, this allows us to prove one of the main results of this paper.

\begin{theorem} \label{genfibo}
The determinant of the matrix $\mathbb{A}_n^{s,k}(r)$ is given by

\[ \det \mathbb{A}_n^{s,k}(r)= (-1)^{(s+kn+1) {r+1 \choose 2}}
\cdot c_2^{(s+kn) {r+1 \choose 2} +2k {r+1 \choose 3}}
\cdot \Delta^{r+1 \choose 2}
\cdot \prod_{i=0}^r {r\choose i}U_{(i+1)k}^{2(r-i)}.\]
\end{theorem}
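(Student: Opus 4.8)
Here is how I would approach proving Theorem~\ref{genfibo}.

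The plan is to recognize $\mathbb{A}_n^{s,k}(r)$ as an instance of the matrix evaluated in the corollary to Lemma~\ref{main_lemma}, by using \eqref{eqWWW} to rewrite each entry as the $r$th power of a form that is \emph{bilinear} in its row index $i$ and its column index $j$. Concretely, I would apply \eqref{eqWWW} with $n$ replaced by $n+i$, with the free parameter there (the symbol $r$ in that corollary) set equal to $0$, and with $t$ replaced by $j$, obtaining
\[
W_{s+k(n+i+j)}=A(j)\,W_{s+k(n+i)}+B(j)\,U_{s+k(n+i)},\qquad A(j)=\frac{W_{kj}}{W_0},\quad B(j)=\frac{-\Delta\,U_{kj}}{W_0}.
\]
Since $A(j),B(j)$ depend only on $j$ while $X_i:=W_{s+k(n+i)}$ and $Y_i:=U_{s+k(n+i)}$ depend only on $i$, the $(i,j)$ entry of $\mathbb{A}_n^{s,k}(r)$ is exactly $\bigl(A(j)X_i+B(j)Y_i\bigr)^r$, so \eqref{restate-lemma} gives
\[
\det\mathbb{A}_n^{s,k}(r)=\prod_{0\le i<j\le r}\bigl(X_iY_j-X_jY_i\bigr)\cdot\prod_{0\le i<j\le r}\bigl(A(i)B(j)-A(j)B(i)\bigr)\cdot\prod_{i=0}^r\binom{r}{i}.
\]

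Next I would evaluate the two products using the Casoratian identity $W_mU_{m+h}-W_{m+h}U_m=W_0\,(-c_2)^m\,U_h$, which is the case $j=0$ of \eqref{eqWU}. For the first product, putting $m=s+k(n+i)$ and $h=k(j-i)$ (so that $m+h=s+k(n+j)$) gives $X_iY_j-X_jY_i=W_0\,(-c_2)^{\,s+k(n+i)}\,U_{k(j-i)}$. For the second, pulling out the common factor $-\Delta/W_0^2$ reduces $A(i)B(j)-A(j)B(i)$ to $-\Delta/W_0^2$ times $W_{ki}U_{kj}-W_{kj}U_{ki}$; applying the same identity with $m=ki$, $h=k(j-i)$ yields $A(i)B(j)-A(j)B(i)=\dfrac{-\Delta\,(-c_2)^{ki}}{W_0}\,U_{k(j-i)}$.

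Then I would assemble. Multiplying the two products over $0\le i<j\le r$: the powers of $W_0$ cancel (exponent $+\binom{r+1}{2}$ from the first product, $-\binom{r+1}{2}$ from the second); the sign $-1$ present in every factor of the second product accumulates to $(-1)^{\binom{r+1}{2}}$; the factor $\Delta$ occurs with exponent $\binom{r+1}{2}$; and the exponent of $(-c_2)$ is
\[
\sum_{0\le i<j\le r}\bigl[(s+kn+ki)+ki\bigr]=\sum_{0\le i<j\le r}(s+kn+2ki)=(s+kn)\binom{r+1}{2}+2k\binom{r+1}{3},
\]
where I would use $\sum_{0\le i<j\le r}i=\sum_{i=0}^r i(r-i)=\binom{r+1}{3}$. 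Splitting $(-c_2)^N=(-1)^Nc_2^N$ and noting $2k\binom{r+1}{3}$ is even, the overall sign becomes $(-1)^{(s+kn+1)\binom{r+1}{2}}$ and the $c_2$-power becomes $c_2^{(s+kn)\binom{r+1}{2}+2k\binom{r+1}{3}}$. Finally $\bigl(\prod_{0\le i<j\le r}U_{k(j-i)}\bigr)^2=\prod_{d=1}^{r}U_{kd}^{\,2(r+1-d)}=\prod_{i=0}^{r}U_{(i+1)k}^{\,2(r-i)}$ after the reindexing $d=i+1$ (the term $i=r$ contributing $1$); combining with $\prod_{i=0}^r\binom{r}{i}$ produces exactly the asserted formula.

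Most of the work is routine bookkeeping; the part needing real care is the exponent of $(-c_2)$, i.e.\ verifying $\sum_{0\le i<j\le r}i=\binom{r+1}{3}$, seeing that the contributions involving $kr$ cancel, and correctly extracting the sign $(-1)^{(s+kn+1)\binom{r+1}{2}}$ and the Laurent power of $c_2$. One technical caveat: the decomposition above divides by $W_0=a_0$, so it is valid only when $a_0\ne 0$; however both sides of the theorem are polynomials in $a_0$ and $a_1$ (for fixed $c_1,c_2$ with $c_2\ne 0$), so the identity, once established for all $(a_0,a_1)$ with $a_0\ne 0$, holds identically — equivalently, the argument actually proves the polynomial identity $a_0^{\,r(r+1)}\det\mathbb{A}_n^{s,k}(r)=a_0^{\,r(r+1)}\cdot(\text{right-hand side})$, which may be divided by $a_0^{\,r(r+1)}$ in the integral domain $\mathbb{R}[a_0,a_1]$. (Alternatively, one chooses the free parameter in \eqref{eqWWW} so that the $W$-value in the denominator is nonzero.)
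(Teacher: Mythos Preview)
Your proof is correct and follows essentially the same strategy as the paper: write each entry via \eqref{eqWWW} in the bilinear form $A(j)X_i+B(j)Y_i$, apply \eqref{restate-lemma}, then evaluate the two double products using the special cases of Proposition~\ref{GenCat}. The only cosmetic difference is that the paper sets the free parameter in \eqref{eqWWW} equal to the matrix size $r$ (so $Y_i=U_{s+k(n+r+i)}$ and the denominator is $W_{-kr}$), whereas you set it equal to $0$ (so $Y_i=U_{s+k(n+i)}$ and the denominator is $W_0$); both choices collapse to the same factor $-\Delta\,(-c_2)^{s+k(n+2i)}U_{k(j-i)}^2$ per pair $(i,j)$, and your choice spares you the extra $(-c_2)^{\pm kr}$ bookkeeping (so your parenthetical remark about ``contributions involving $kr$'' cancelling is moot in your own setup). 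Your explicit treatment of the degenerate case $W_0=0$ via the polynomial-identity argument is a nice addition that the paper leaves implicit.
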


\begin{proof}  By  \eqref{eq30}, \eqref{eqWWW}, \eqref{restate-lemma},  
and  \eqref{eqWU}  respectively, we have
\begin{align*}
& \det \mathbb{A}_n^{s,k}(r) = \det( W_{s+k(n+i+j)}^r)_{0 \leq i,j \leq r}    
=  \det( \left( A(j)W_{s+k(n+i)}+B(j)U_{s+k(n+r+i)} \right)^r)_{0 \leq i,j \leq r}   \\
&=  \prod_{0 \leq i < j \leq r}(W_{s+k(n+i)}U_{s+k(n+r+j)}-W_{s+k(n+j)}U_{s+k(n+r+i)})
\prod_{0 \leq i < j \leq r}\left(A(i)B(j)-A(j)B(i)\right)\prod_{i=0}^r \binom{r}{i} \\
& =    \prod_{0 \leq i < j \leq r}(  (-c_2)^{s+k(n+r+i)}U_{k(j-i)}W_{-kr})  
\prod_{0 \leq i < j \leq r}\left(\dfrac{-\Delta \cdot (-c_2)^{-kr}}{W_{-kr}^2}
(-c_2)^{ki}U_{k(j-i)}W_{-kr}\right)
\prod_{i=0}^r \binom{r}{i} \\
& = \prod_{0 \leq i < j \leq r}\left(  -\Delta \cdot (-c_2)^{s+k(n+2i)}U_{k(j-i)}^2\right)
\prod_{i=0}^r \binom{r}{i},  \\
\end{align*}
 
Rearranging the last expression allows us to obtain the desired identity. \end{proof}

\begin{remark}
By letting $W_n= F_n$ in Theorem \ref{genfibo} and noting that for the Fibonacci sequence $c_2 = 1$ and $\Delta = 1$, we then derive \eqref{eq2}. 
\end{remark}

\section{Determinants involving products of terms of second-order recurrence}
\label{sec:determinant-product}

The following lemma was mentioned by Krattenthaler \cite{krat} as part of the factorization method. We provide a different proof of this lemma using the Desnanot-Jacobi identity \cite{dodgson}.

\begin{lemma}\label{krat3}
Let $X_0,\dots,X_r, D_1,\dots,D_r$, and $E_1, \dots, E_r$ be indeterminates. Then 
\[  \det\left( \prod_{\ell=j+1}^{r}(X_i+D_\ell) 
\cdot \prod_{m=1}^{j}(X_i+E_m) \right)_{0 \leq i,j \leq r}  
=  \prod_{0 \leq i < j \leq r}(X_j-X_i) \cdot \prod_{1 \leq i \leq j \leq r}(D_j-E_i).  \]
\end{lemma}

An alternative way of writing this identity would be
\begin{multline}\label{det1}
\det\left( \prod_{\ell=j+1}^{r}(A(d_{\ell})X_i+B(e_{\ell})Y_i)\cdot 
\prod_{m=1}^{j}(A(e_m)X_i+B(e_m)Y_i)\right)_{0 \leq i,j \leq r} \\
= \prod_{0 \leq i < j \leq r}(X_iY_j-X_jY_i)
\prod_{1 \leq i \leq j \leq r}(B(e_i)A(d_j)-A(e_i)B(d_j)).
\end{multline}

The main result of this section is as follows:

\begin{theorem}\label{MyPow}
Let $d_1,\dots,d_r$ and $e_1, \dots, e_r$ be sequences of integers. Then
\begin{multline*} 
\det\left( \prod_{\ell=j+1}^{r}W_{s+k(n+i+d_\ell)} 
\cdot \prod_{m=1}^{j}W_{s+k(n+i+e_m)} \right)_{0 \leq i,j \leq r}  \\
= (-\Delta)^{{r+1 \choose 2}} 
\cdot (-c_2)^{(s+kn){r+1 \choose 2}+k{r+1 \choose 3}}
\cdot \prod_{\ell=1}^r U_{k\ell }^{r+1-\ell}
\prod_{1 \leq i \leq j \leq r}(-c_2)^{kd_j}U_{k(e_i-d_j)}.
\end{multline*}
\end{theorem}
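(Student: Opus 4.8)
The plan is to imitate the proof of Theorem~\ref{genfibo}: linearize every entry of the matrix so that it becomes an instance of the factorization identity \eqref{det1}, and then evaluate the resulting two products with the help of \eqref{eqWU}.

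First I would apply the identity \eqref{eqWWW} with $n$ replaced by $n+i$, which gives for every integer $t$
\[
W_{s+k(n+i+t)} = A(t)\,W_{s+k(n+i)} + B(t)\,U_{s+k(n+i+r)},
\]
with $A(t)=W_{k(t-r)}/W_{-kr}$ and $B(t)=-(-c_2)^{-kr}\,\Delta\,U_{kt}/W_{-kr}$ independent of $i$ and $n$. Writing $X_i:=W_{s+k(n+i)}$ and $Y_i:=U_{s+k(n+i+r)}$, the $(i,j)$ entry of the matrix in the theorem becomes
\[
\prod_{\ell=j+1}^{r}\bigl(A(d_\ell)X_i+B(d_\ell)Y_i\bigr)\cdot\prod_{m=1}^{j}\bigl(A(e_m)X_i+B(e_m)Y_i\bigr),
\]
which is precisely the form of the entry on the left-hand side of \eqref{det1}. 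Applying \eqref{det1} then yields
\[
\det=\prod_{0\le i<j\le r}\bigl(X_iY_j-X_jY_i\bigr)\cdot\prod_{1\le i\le j\le r}\bigl(B(e_i)A(d_j)-A(e_i)B(d_j)\bigr).
\]

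Next I would put the two products into closed form using \eqref{eqWU}. Expressing $X_j$ and $Y_j$ as the shifts of $X_i$ and $Y_i$ by $k(j-i)$ and applying \eqref{eqWU} (taking the common shift to be $s+k(n+i+r)$, the step to be $k(j-i)$, and the offset to be $-kr$) gives
\[
X_iY_j-X_jY_i=(-c_2)^{\,s+k(n+i)+kr}\,U_{k(j-i)}\,W_{-kr}.
\]
Substituting the formulas for $A$ and $B$ into the second product and using \eqref{eqWU} once more to simplify $U_{ke_i}W_{k(d_j-r)}-U_{kd_j}W_{k(e_i-r)}$ gives
\[
B(e_i)A(d_j)-A(e_i)B(d_j)=\frac{-\,\Delta\,(-c_2)^{\,kd_j-kr}\,U_{k(e_i-d_j)}}{W_{-kr}}.
\]

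Finally I would multiply the two products and do the bookkeeping. Both index sets $\{0\le i<j\le r\}$ and $\{1\le i\le j\le r\}$ have $\binom{r+1}{2}$ elements, so the factor $W_{-kr}^{\binom{r+1}{2}}$ coming from the first product cancels the $W_{-kr}^{-\binom{r+1}{2}}$ from the second; the constants $-\Delta$ produce $(-\Delta)^{\binom{r+1}{2}}$; the factors $U_{k(j-i)}$ collect into $\prod_{\ell=1}^{r}U_{k\ell}^{\,r+1-\ell}$ because each difference $j-i=\ell$ occurs $r+1-\ell$ times; and the factors $U_{k(e_i-d_j)}$ are already in the target form. The step I expect to be most error-prone is tracking the exponent of $-c_2$: the first product contributes $(s+kn+kr)\binom{r+1}{2}+k\binom{r+1}{3}$ (using $\sum_{0\le i<j\le r}i=\binom{r+1}{3}$), the second contributes $k\sum_{j=1}^{r}j\,d_j-kr\binom{r+1}{2}$ (each fixed $j$ pairing with $j$ values of $i$), the two $kr\binom{r+1}{2}$ terms cancel, and since $k\sum_{j=1}^{r}j\,d_j=\sum_{1\le i\le j\le r}kd_j$ the surviving exponent is exactly $(s+kn)\binom{r+1}{2}+k\binom{r+1}{3}$ together with the factor $\prod_{1\le i\le j\le r}(-c_2)^{kd_j}$ of the claimed formula. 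It then only remains to note that \eqref{eqWWW} presupposes $W_{-kr}\ne0$; since both sides of the asserted identity are polynomials in the initial data $a_0,a_1$ and $W_{-kr}$ vanishes only on a proper affine subset, the degenerate case follows by continuity, exactly as in the Corollary to Lemma~\ref{main_lemma}.
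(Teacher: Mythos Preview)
Your proof is correct and follows precisely the route the paper sketches: apply \eqref{eqWWW} to linearize each factor, invoke the determinant identity \eqref{det1}, and then simplify both resulting products via the Catalan-type identity (you use \eqref{eqWU}, which is indeed the relevant special case here, exactly as in the proof of Theorem~\ref{genfibo}). Your detailed bookkeeping of the $(-c_2)$ exponents, the cancellation of $W_{-kr}^{\binom{r+1}{2}}$, and the continuity argument for the degenerate case $W_{-kr}=0$ fill in the details the paper leaves to the reader.
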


\begin{proof}
We respectively apply the identities \eqref{eqWWW},\eqref{det1}, and \eqref{eqWW}, and the details of the proof are similar to those of Theorem \ref{genfibo}.
\end{proof}

The following slight variation of the result by Alfred \cite{alfred} called
\textit{basic power determinant} is a special case of this theorem.
\begin{corollary}\label{Alfred}
Let $r,s$, and $k$ be integers with $r\ge 0$. Then 
$$\begin{vmatrix}
F_{s}^r & F_{s}^{r-1}F_{s+k} & \cdots &   F_{s+k}^r\\
F_{s+k}^r & F_{s+k}^{r-1}F_{s+2k} & \cdots &   F_{s+2k}^r\\
\vdots & \vdots & \ddots &  \vdots  \\
F_{s+rk}^r & F_{s+rk}^{r-1}F_{s+(r+1)k} & \cdots &   F_{s+(r+1)k}^r\\
\end{vmatrix}
=(-1)^{(s+1){r+1 \choose 2}+k{r+1 \choose 3}}
F_k^{r+1 \choose 2}
\prod_{\ell=1}^r F_{k\ell }^{r+1-\ell}.$$

\end{corollary}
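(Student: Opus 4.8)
The plan is to derive Corollary \ref{Alfred} directly from Theorem \ref{MyPow} by specializing the second-order recurrence and the index sequences. First I would set $W_n = F_n$, so that the defining parameters become $c_1 = 1$, $c_2 = 1$, and $a_0 = 0$, $a_1 = 1$; consequently $\Delta = a_1^2 - c_1 a_0 a_1 - c_2 a_0^2 = 1$ and the auxiliary sequence $U_n$ coincides with $F_n$ itself (since $U_n = W_n(0,1;c_1,c_2)$ and here $c_1=c_2=1$). With these substitutions, the prefactor $(-\Delta)^{\binom{r+1}{2}} = (-1)^{\binom{r+1}{2}}$ and $(-c_2)^{(s+kn)\binom{r+1}{2} + k\binom{r+1}{3}} = (-1)^{(s+kn)\binom{r+1}{2} + k\binom{r+1}{3}}$, while $\prod_{\ell=1}^r U_{k\ell}^{r+1-\ell} = \prod_{\ell=1}^r F_{k\ell}^{r+1-\ell}$.

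Next I would choose the index sequences to reproduce the displayed matrix. The $(i,j)$ entry of the matrix in Corollary \ref{Alfred} is $F_{s+ik}^{\,r-j} F_{s+(i+1)k}^{\,j}$, which is the product of $r-j$ copies of $W_{s+k(n+i+0)}$ and $j$ copies of $W_{s+k(n+i+1)}$ once we take $n = 0$. So I would set $n = 0$, $d_\ell = 0$ for all $\ell = 1,\dots,r$, and $e_m = 1$ for all $m = 1,\dots,r$; then $\prod_{\ell=j+1}^{r} W_{s+k(i+d_\ell)} \cdot \prod_{m=1}^{j} W_{s+k(i+e_m)} = W_{s+ki}^{\,r-j} W_{s+k(i+1)}^{\,j} = F_{s+ki}^{\,r-j} F_{s+(i+1)k}^{\,j}$, matching the corollary's entries exactly.

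It then remains to simplify the last product $\prod_{1\le i\le j\le r}(-c_2)^{k d_j} U_{k(e_i - d_j)}$ under these choices. Since $d_j = 0$ we get $(-c_2)^{k d_j} = 1$, and since $e_i - d_j = 1$ we get $U_{k(e_i - d_j)} = U_k = F_k$; the index set $\{(i,j) : 1\le i\le j\le r\}$ has $\binom{r+1}{2}$ elements, so the whole product equals $F_k^{\binom{r+1}{2}}$. Combining this with the simplified prefactors and setting $kn = 0$ in the exponent yields exactly $(-1)^{(s+1)\binom{r+1}{2} + k\binom{r+1}{3}} F_k^{\binom{r+1}{2}} \prod_{\ell=1}^r F_{k\ell}^{r+1-\ell}$, which is the claimed right-hand side.

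The only point requiring genuine care — and hence the main obstacle — is confirming that the entry-matching is correct, i.e.\ that the column index $j$ in Theorem \ref{MyPow} governs the split into $r-j$ factors of type $d$ and $j$ factors of type $e$ in precisely the way that produces the descending powers $F_{s+ik}^{r}, F_{s+ik}^{r-1}F_{s+(i+1)k}, \dots, F_{s+(i+1)k}^{r}$ across the columns of Corollary \ref{Alfred}; one must check the endpoint cases $j=0$ (empty $e$-product, giving $F_{s+ik}^r$) and $j=r$ (empty $d$-product, giving $F_{s+(i+1)k}^r$) to be sure the conventions agree. Everything else is a direct substitution of $c_1=c_2=1$, $\Delta=1$, $U_n=F_n$, $n=0$, $d_\ell\equiv 0$, $e_m\equiv 1$ into the formula of Theorem \ref{MyPow}, together with the elementary count $|\{1\le i\le j\le r\}| = \binom{r+1}{2}$.
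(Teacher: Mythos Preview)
Your proposal is correct and follows exactly the same route as the paper: specialize Theorem~\ref{MyPow} with $W_n=F_n$ (so $c_1=c_2=1$, $\Delta=1$, $U_n=F_n$), $n=0$, $d_\ell\equiv 0$, and $e_m\equiv 1$, then simplify. Your verification of the entry-matching, the sign computation, and the count $|\{1\le i\le j\le r\}|=\binom{r+1}{2}$ are all accurate and make explicit what the paper leaves to the reader.
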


\begin{proof} This identity follows immediately from Theorem \ref{MyPow} by letting $d_1=d_2=\dots=d_r=0$,
$e_1=e_2=\dots=e_r=1$, $n=0$ together with the recurrence and the initial values of the Fibonacci numbers.
\end{proof}

Another interesting case arises when we let $(d_j)$ and $(e_j)$  in Theorem \ref{MyPow} be in some specific forms.

\begin{corollary}\label{thmN}
Let $s,k,n$, and $p$ be integers. Let $d_j = p-1+j$ and $e_j = j-1$ for $1 \leq j \leq r$. Then

\begin{multline*} \det\left( \prod_{\ell=j+1}^{r}W_{s+k(n+i+d_\ell)} 
\cdot \prod_{m=1}^{j}W_{s+k(n+i+e_m)}\right)_{0\le i,j \le r} \\
= \Delta^{r+1\choose 2}(-c_2)^{(s+kn){r+1 \choose 2}+2k{r+1\choose 3}} 
\cdot \prod_{\ell=1}^r U_{\ell k}^{r+1-\ell}
\cdot \prod_{\ell=0}^{r-1} U_{k(p+\ell)}^{r-\ell}.
\end{multline*}
\end{corollary}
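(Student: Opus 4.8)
The plan is to obtain Corollary~\ref{thmN} as a direct specialization of Theorem~\ref{MyPow}, so the only real work is to simplify the product
\[
\prod_{1 \le i \le j \le r} (-c_2)^{k d_j} U_{k(e_i - d_j)}
\]
and the sign/power-of-$c_2$ prefactor under the substitution $d_j = p-1+j$, $e_j = j-1$. First I would substitute directly: with $e_i - d_j = (i-1) - (p-1+j) = i - j - p$, the factor becomes $\prod_{1 \le i \le j \le r} (-c_2)^{k(p-1+j)} U_{k(i-j-p)}$. Since $i \le j$ we always have $i - j - p \le -p$, so all these indices are negative (for $p \ge 1$; the general integer case is handled by the same algebra since the $U$-recurrence extends to negative indices). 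I would then use the reflection formula for $U_n$ — namely $U_{-m} = (-1)^{m+1} c_2^{-m} U_m$, which follows from \eqref{eqUU} or directly from the recurrence — to rewrite $U_{k(i-j-p)} = U_{-k(j+p-i)} = (-1)^{k(j+p-i)+1} c_2^{-k(j+p-i)} U_{k(j+p-i)}$. This converts the double product into $\prod_{\ell=0}^{r-1} U_{k(p+\ell)}^{\,r-\ell}$ up to an explicit power of $(-1)$ and of $c_2$, after reindexing by $\ell = j + p - i - p = j - i$ and counting multiplicities: for fixed $\ell$, the number of pairs $(i,j)$ with $1 \le i \le j \le r$ and $j - i = \ell$ is $r - \ell$, which reproduces exactly the exponent $r - \ell$ on $U_{k(p+\ell)}$.

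Next I would bookkeep the accumulated powers of $c_2$ and of $-1$. From Theorem~\ref{MyPow} the prefactor is $(-\Delta)^{\binom{r+1}{2}} (-c_2)^{(s+kn)\binom{r+1}{2} + k\binom{r+1}{3}} \prod_{\ell=1}^r U_{k\ell}^{r+1-\ell}$, and to this I must add: (i) from $\prod_{1\le i\le j\le r}(-c_2)^{kd_j}$, a contribution with exponent $k \sum_{1 \le i \le j \le r} (p-1+j)$ on $-c_2$; (ii) from the reflection formula applied $\binom{r+1}{2}$ times, signs $(-1)^{\sum k(j+p-i) + \binom{r+1}{2}}$ and a factor $c_2^{-k\sum (j+p-i)}$. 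I expect the $p$-dependent and $s$-, $n$-independent pieces of these sums to telescope against each other so that the net power of $c_2$ collapses to $2k\binom{r+1}{3}$ (matching the shift from $k\binom{r+1}{3}$ in Theorem~\ref{MyPow} to $2k\binom{r+1}{3}$ here) and the net sign collapses to $+1$, turning $(-\Delta)^{\binom{r+1}{2}}$ effectively into $\Delta^{\binom{r+1}{2}}$ in the stated form — consistent with the parallel phenomenon already visible in Theorem~\ref{genfibo} versus Theorem~\ref{MyPow}. Concretely this requires evaluating $\sum_{1 \le i \le j \le r}(j - i)$, $\sum_{1 \le i \le j \le r} j$, and $\sum_{1 \le i \le j \le r} i$ in closed form; these are elementary (each is a combination of $\binom{r+1}{2}$ and $\binom{r+1}{3}$), so no genuine difficulty arises there.

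The main obstacle, such as it is, will be the careful sign and exponent accounting: one must be scrupulous about whether $i - j - p$ can be zero or positive for the relevant range of $p$ (it cannot when $p \ge 1$; for $p \le 0$ one checks that the identity still holds by the same manipulation since everything is a polynomial identity in the $U$-values, or by continuity/analytic-continuation as in the corollary after Lemma~\ref{main_lemma}), and about collecting the three separate sources of $(-1)$'s and $c_2$'s without an off-by-one error. I would organize the computation by first proving the clean reindexing identity
\[
\prod_{1 \le i \le j \le r} U_{k(i-j-p)} \;=\; (-1)^{E} \, c_2^{-kF} \prod_{\ell=0}^{r-1} U_{k(p+\ell)}^{\,r-\ell},
\qquad E = \sum_{1\le i\le j\le r}\!\bigl(k(j+p-i)+1\bigr),\ \ F = \sum_{1\le i\le j\le r}(j+p-i),
\]
then substituting the closed forms for $E$ and $F$, and finally combining with the Theorem~\ref{MyPow} prefactor and the $\prod (-c_2)^{kd_j}$ term. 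Everything then reduces to verifying two scalar exponent identities modulo $2$ (for the sign) and over $\mathbb{Z}$ (for the power of $c_2$), which is routine.
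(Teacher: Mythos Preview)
Your proposal is correct and follows exactly the approach the paper intends: the paper gives no explicit proof of Corollary~\ref{thmN} at all, merely presenting it as the specialization of Theorem~\ref{MyPow} obtained by setting $d_j=p-1+j$ and $e_j=j-1$. Your plan---substitute into Theorem~\ref{MyPow}, use the reflection formula $U_{-m}=(-1)^{m+1}c_2^{-m}U_m$ to flip the indices, reindex the double product by $\ell=j-i$ to obtain $\prod_{\ell=0}^{r-1}U_{k(p+\ell)}^{r-\ell}$, and then do the elementary bookkeeping of signs and powers of $c_2$ via the closed forms $\sum_{1\le i\le j\le r}(j-i)=\binom{r+1}{3}$ and $\#\{(i,j):1\le i\le j\le r\}=\binom{r+1}{2}$---is precisely the computation the paper is suppressing.
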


\section{Acknowledgments}
We thank the anonymous referee and Ramesh Boonratana for their meticulous reading and helpful comments.

\bigskip
\hrule
\bigskip

\noindent 2010 {\it Mathematics Subject Classification}: Primary 11B39; Secondary 11B20. \\
\noindent \emph{Keywords:} Fibonacci number, determinant, second-order recurrence, Catalan identity.

\bigskip
\hrule
\bigskip

\noindent (Concerned with sequence
\seqnum{A000045}.)

\bigskip
\hrule
\bigskip

%\vspace*{+.1in}
%\noindent
%Received September 7 2015;
%revised version received  
%November 22 2015.
%Published in {\it Journal of Integer Sequences}, December 17 2015.
%Minor revision, January 4 2016.

\bigskip
\hrule
\bigskip

\noindent
Return to
\htmladdnormallink{Journal of Integer Sequences home page}{http://www.cs.uwaterloo.ca/journals/JIS/}.
\vskip .1in


\begin{thebibliography}{8}

\bibitem{alfred-0}
Brother U. Alfred, Advanced problems and solutions, \emph{Fibonacci Quart.} \textbf{1} (1963), 48.

\bibitem{alfred}
Brother U. Alfred, Some determinants involving powers of Fibonacci numbers, \emph{Fibonacci Quart.} \textbf{2} (1964), 81--92.

\bibitem{dodgson}
T. Amdeberhan and D. Zeilberger, Determinants through the looking glass,
\emph{Adv. Appl. Math.} \textbf{27} (2001), 225--230.

\bibitem{carlitz}
L. Carlitz, Some determinants containing powers of Fibonacci numbers, \emph{Fibonacci Quart.} \textbf{4} (1966), 129--134.


\bibitem{krat}
C. Krattenthaler, Advanced determinant calculus: a complement, \emph{Linear Algebra Appl.} \textbf{411} (2005), 68--166.


\bibitem{melham}
R. S. Melham and A. G. Shannon, A generalization of the Catalan identity and some consequences, \emph{Fibonacci Quart.} \textbf{33} (1995), 82--84. 


\bibitem{parker}
F. D. Parker, Advanced problems and solutions, \emph{Fibonacci Quart.} \textbf{2} (1964), 303.


\bibitem{thotsaporn}
T. Thanatipanonda, Personal website, \url{http://thotsaporn.com/DetFibo.txt}.


\end{thebibliography}
\end{document}